\newtheorem{theorem}[equation]{Theorem}
\newtheorem{lemma}[equation]{Lemma}
\newtheorem{proposition}[equation]{Proposition}
\newtheorem{corollary}[equation]{Corollary}
\theoremstyle{remark}
\newtheorem{definition}[equation]{Definition}
\newtheorem{remark}[equation]{Remark}
\newcommand{\nc}{\newcommand}
\nc{\dmo}{\DeclareMathOperator}
\dmo{\Arr}{Arr}
\dmo{\Ch}{Ch}
\dmo{\Der}{D}
\dmo{\Ho}{Ho}
\dmo{\Mod}{Mod}
\dmo{\op}{op}
\dmo{\Qcoh}{Qcoh}
\dmo{\Spec}{Spec}
\nc{\aka}{{a.\,k.\,a.\ }}
\nc{\eg}{\textsl{e.g.}}
\nc{\ie}{\textsl{i.e.\ }}
\nc{\cat}[1]{\mathscr #1}
\nc{\Cat}{\mathsf{Cat}}
\nc{\CAT}{\mathsf{CAT}}
\nc{\DER}{\bbD\mathrm{er}}
\nc{\inv}{^{-1}}
\nc{\isotoo}{\,\otoo{\cong}\,}
\nc{\HO}{\mathbb{HO}}
\nc{\Loop}{\underline{\bbN}}
\nc{\MMod}{\text{-}\kern-0.1em\Mod}%
\nc{\otoo}[1]{\overset{#1}{\,\too\,}}
\nc{\John}[1]{{[\color{Green}#1]}}
\nc{\Paul}[1]{{[\color{Blue}#1]}}
\nc{\too}{\mathop{\longrightarrow}\limits}
\nc{\threestars}{\medbreak\begin{center}*\ *\ *\end{center}}
\nc{\calO}{\mathcal{O}}
\nc{\bbA}{\mathbb{A}}
\nc{\bbD}{\mathbb{D}}
\nc{\bbN}{\mathbb{N}}
\nc{\bbZ}{\mathbb{Z}}
\title[Affine space over a triangulated category]{Affine space over triangulated categories:\\ A further invitation to Grothendieck derivators}
\author{Paul Balmer}
\address{Paul Balmer, Mathematics Department, UCLA, Los Angeles, CA 90095-1555, USA}
\email{balmer@math.ucla.edu}
\urladdr{http://www.math.ucla.edu/$\sim$balmer}
\thanks{Supported by NSF grant~DMS-1303073 and Research Award of the Humboldt Foundation.}
\author{John Zhang}
\address{John Zhang, Mathematics Department, UCLA, Los Angeles, CA 90095-1555, USA}
\email{jmzhang@math.ucla.edu}
\urladdr{http://www.math.ucla.edu/$\sim$jmzhang}
\date{\today} 
\begin{document}

\begin{abstract}
We propose a construction of affine space (or ``polynomial rings'') over a triangulated category, in the context of stable derivators.
\end{abstract}

\maketitle

\smallbreak
\ \hfill \textit{Dedicated to Chuck Weibel on the occasion of his 65th birthday}
\medbreak


\subsection*{Introduction}\ \smallbreak

Among their various incarnations throughout mathematics, triangulated categories play an important role in algebraic geometry as derived categories of schemes, beginning with Grothendieck's foundational work, see~\cite{Grothendieck60} or~\cite{Hartshorne66}.

In fact, the Grothendieck-Verdier formalism by means of distinguished triangles~\cite{Verdier96,Neeman01} can be considered as the lightest axiomatization of stable homotopy theory available on the market. Heavier approaches can involve model categories, dg-enrichments, $\infty$-categories, etc. The simplicity of triangulated categories presents advantages in terms of versatility and applications. For instance we expect most readers, way beyond algebraic geometers, to have some acquaintance with triangulated categories. On the other hand, this light axiomatic can also be a weakness, notably when it comes to producing new triangulated categories out of old ones. For example, the triangular structure is sufficient to describe Zariski localization~\cite{ThomasonTrobaugh90,Neeman92b}, or even \'etale extensions~\cite{Balmer14pp}, but there is no known construction, purely in terms of triangulated categories, that would describe the derived category of affine space~$\bbA^n_X$ out of the derived category of~$X$. This is the problem we want to address.

\smallbreak
The goal of this short note is to highlight a straightforward and elementary way to construct affine space if one uses the formalism of \emph{derivators}. (See Theorem~\ref{thm:main}.)
\smallbreak

Currently, derivators are nowhere as well-known as triangulated categories and it is fair to assume that very few of our readers really know what a derivator is. The non-masochistic reader will even appreciate that the derivator solution to the above affine space problem is \emph{essentially trivial}, a further indication of the beauty of this theory. We hope that our observation will serve as a modest additional incentive for some readers to use the axiomatic of Grothendieck derivators~\cite{Grothendieck91}, or the closely related approach of Heller~\cite{Heller88}.

Our observation also opens the door to an $\bbA^1$-homotopic investigation of derivators, which will be the subject of further work.

\subsection*{A glimpse at derivators}\ \smallbreak

The idea behind derivators is simple. Suppose that your triangulated category of interest, say~$\cat D$, is the homotopy category $\cat D=\Ho(\cat M)$ of some ``model''~$\cat M$, by which we mean that $\cat D$ can be obtained from~$\cat M$ by inverting a reasonable class of maps, called \emph{weak-equivalences}. For instance $\cat M$ could be the category of complexes of $R$-modules, with quasi-isomorphisms as weak-equivalences. Then for any small category~$I$, the category of functors $\cat M^{I}$ from~$I$ to~$\cat M$ (\aka $I$-shaped diagrams in~$\cat M$) should also yield a homotopy category by inverting objectwise weak-equivalences:
\begin{equation}
\label{eq:Ho}%
\bbD(I):=\Ho(\cat M^I)\,.
\end{equation}
This approach requires some care but if the word ``model'' is given a precise meaning, it works, see~\cite[p.2-3]{Heller88} or~\cite[p.2]{Groth13}. Now, if $I=e$ is the final category with one object and one identity morphism, then $\bbD(e)=\cat D$ is the category we started from, since $\cat M^{e}=\cat M$. For another example, if $I=[1]=\{\bullet\to \bullet\}$ is the category with two objects and one non-identity morphism, then $\bbD([1])=\Ho(\Arr(\cat M))$ is the homotopy category of arrows (refining the category $\Arr(\Ho(\cat M))$ of arrows in~$\cat D$, which is never triangulated unless $\cat D=0$); and similarly for other categories~$I$.

This collection of homotopy categories $\bbD(I)=\Ho(\cat M^I)$, indexed over all small categories~$I$, varies functorially in~$I$, because every functor $u:I\to J$ induces a functor $u^*:\bbD(J)\to \bbD(I)$ via precomposition with~$u$. Also, every natural transformation $\alpha:u\rightarrow v$ between functors $u,v:I\rightarrow J$ induces a natural transformation~$\alpha^*:u^*\rightarrow v^*$; so in technical terms, $\bbD$ is a \emph{strict 2-functor}
\[
\bbD:\Cat^{\op,\textrm{-}} \too \CAT
\]
from the 2-category of small categories~$\Cat$ to the (big) 2-category~$\CAT$ of all categories (the ``op" indicates that $\bbD$ reverses direction of 1-morphisms, $u\mapsto u^*$).

Independently of the existence of a model~$\cat M$ as above, an arbitrary strict 2-functor $\bbD:\Cat^{\op,\textrm{-}}\too \CAT$ is called a \emph{pre-derivator} and the category~$\bbD(e)$ is called the \emph{base} of the prederivator~$\bbD$.

Actual \emph{derivators} are pre-derivators satisfying a few axioms giving them some spine. In addition to~\cite{Grothendieck91}, contemporary literature offers several good sources to learn the precise axioms, see for instance the detailed treatment in Groth~\cite{Groth13} or a shorter account in Cisinski-Neeman~\cite[\S\,1]{CisinskiNeeman08}. We shall not repeat the axioms in this short note, since our focus is more towards \emph{constructing} new derivators out of old ones, but we would like to point out that these axioms are relatively simple. Moreover, if $\bbD$ is \emph{stable}, then its values $\bbD(I)$ have natural triangulations that are moreover compatible in the sense that the functors $u^*$ are triangulated functors.

Of course, we have a derivator extending the derived category of a ring:
\begin{theorem}[Grothendieck]
\label{thm:der}%
Let $R$ be a ring and consider the category of chain complexes $\cat M=\Ch(R\MMod)$ with weak-equivalences the quasi-isomorphisms. Then formula~\eqref{eq:Ho} defines a derivator that we shall denote by~$\DER(R\MMod)$. Its base is the usual derived category~$\Der(R\MMod)$.
\end{theorem}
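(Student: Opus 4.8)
The plan is to realize $\cat M=\Ch(R\MMod)$ as a sufficiently well-behaved model category and then to check, essentially mechanically, that the homotopy categories of its diagram categories satisfy the axioms of a derivator. The only input genuinely specific to $R\MMod$ is the existence of a good model structure: since $R\MMod$ is a Grothendieck abelian category, $\cat M$ carries the \emph{projective} model structure, whose weak-equivalences are the quasi-isomorphisms, whose fibrations are the degreewise surjections, and whose cofibrations are determined by the left lifting property. This model structure is cofibrantly generated, indeed combinatorial, and it supplies the ``precise meaning'' of the word \emph{model} alluded to before~\eqref{eq:Ho}.

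First I would fix, for every small category~$I$, the projective model structure on the diagram category~$\cat M^I$ (objectwise weak-equivalences and fibrations), whose existence is guaranteed by cofibrant generation of~$\cat M$, and set $\bbD(I):=\Ho(\cat M^I)$ as in~\eqref{eq:Ho}. Any functor $u:I\to J$ induces a restriction $u^*:\cat M^J\to\cat M^I$ preserving objectwise weak-equivalences, hence a functor $u^*:\bbD(J)\to\bbD(I)$; natural transformations are transported likewise, and strictness of the assignment is inherited from $\cat M^{(-)}$. This yields a prederivator $\bbD:\Cat^{\op,\textrm{-}}\too\CAT$. The base is then identified immediately: $\bbD(e)=\Ho(\cat M^e)=\Ho(\cat M)=\Der(R\MMod)$, by the very definition of the derived category as the localization of $\cat M$ at the quasi-isomorphisms.

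It remains to verify the derivator axioms, of which the first three are routine. Axiom~(Der1), that $\bbD$ takes disjoint unions of categories to products and the empty category to the terminal category, is formal from $\cat M^{I\sqcup J}\cong\cat M^I\times\cat M^J$ and the fact that $\cat M^{\emptyset}$ is terminal, both preserved by~$\Ho$. Axiom~(Der2), conservativity, holds because weak-equivalences in $\cat M^I$ are detected objectwise, so a morphism of $\bbD(I)$ is invertible as soon as all its restrictions $i^*$ along the objects $i:e\to I$ are. For axiom~(Der3) I would use that, $\cat M$ being complete and cocomplete, $u^*$ has honest adjoints on both sides, namely the Kan extensions $\mathrm{Lan}_u\dashv u^*\dashv\mathrm{Ran}_u$; with respect to the projective model structure $(\mathrm{Lan}_u,u^*)$ is a Quillen adjunction, while with respect to the \emph{injective} model structure---available since $\cat M$ is combinatorial and inducing the same homotopy category---$(u^*,\mathrm{Ran}_u)$ is Quillen. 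Deriving these yields the sought adjoints $u_!\dashv u^*\dashv u_*$ on homotopy categories.

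The main obstacle is axiom~(Der4), the base change or Beck--Chevalley condition asserting that the homotopy Kan extensions are computed pointwise over comma categories. Concretely, for an object $j$ of $J$ with comma category $u/j$ and projection $p:u/j\to I$, one must show that the canonical mate $\mathrm{hocolim}_{u/j}\,p^*\too j^*u_!$ is an isomorphism in $\bbD(e)$, and dually that $j^*u_*\too\mathrm{holim}_{j/u}\,p^*$ is one. This is where genuine care is needed, since the derived left Kan extension must be compared with the pointwise homotopy colimit; the comparison rests on a cofinality analysis of the comma categories together with the compatibility of the derived functors with the Quillen adjunctions of the previous step. Once~(Der4) holds, the collection $\bbD(I)=\Ho(\cat M^I)$ satisfies all the axioms and is the asserted derivator $\DER(R\MMod)$; moreover, since $\cat M$ is a \emph{stable} model category, this derivator is stable, so its values carry the compatible triangulations mentioned above.
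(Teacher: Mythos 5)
Your overall strategy is the same one the paper takes in Remark~\ref{rem:der}: endow $\Ch(R\MMod)$ with the projective model structure and deduce that formula~\eqref{eq:Ho} yields a derivator from the general fact that model categories do. The identification of the base, the verification of (Der1) and (Der2), and the use of the projective/injective model structures on $\cat M^I$ to produce the adjoints $u_!\dashv u^*\dashv u_*$ for (Der3) are all correct and are exactly the moves the paper alludes to when it says one can ``play with different model structures on~$\cat M^I$.''

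The gap is (Der4). You correctly identify it as ``the main obstacle,'' state precisely what must be shown (that the mates $\mathrm{hocolim}_{u/j}\,p^*\to j^*u_!$ and $j^*u_*\to\mathrm{holim}_{j/u}\,p^*$ are isomorphisms), and then do not prove it: the sentence about ``a cofinality analysis of the comma categories together with the compatibility of the derived functors with the Quillen adjunctions'' is a description of the difficulty, not an argument. This pointwise formula for derived Kan extensions is the entire substance of the theorem --- it is precisely what Cisinski proves in~\cite[Theorem~1]{Cisinski03}, the result the paper cites to dispose of Theorem~\ref{thm:der} in one line. To close the gap you should either invoke that theorem (at which point your hand-verification of (Der1)--(Der3) becomes redundant, since Cisinski's statement already delivers the full derivator), or actually carry out the comparison, which requires a genuine induction over the structure of the comma categories and is far from ``essentially mechanical.'' The paper also records a second route you might prefer: since $R\MMod$ is Grothendieck abelian, one can appeal directly to~\cite[Chap.~III, p.~10]{Grothendieck91}, where the case $\cat A=R\MMod$ is treated explicitly without choosing a model structure at all.
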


References about the above are given in Remark~\ref{rem:der} below.

\subsection*{Statement of the result}\
\smallbreak

For our purpose, we only need one basic technique from derivator theory:

\begin{definition}
\label{def:shift}%
Given a fixed small category~$L$, we can \emph{shift} a derivator~$\bbD$ by~$L$ and obtain a new derivator~$\bbD^L$ defined by~$\bbD^L(I)=\bbD(L\times I)$.
\end{definition}

More precisely, $\bbD^L$ is the composite 2-functor $\Cat^{\op,\textrm{-}} \ \otoo{L\times-} \ \Cat^{\op,\textrm{-}} \otoo{\bbD} \CAT$. The fact that this prederivator~$\bbD^L$ remains a derivator is not completely trivial but it is a useful part of the elementary theory, see~\cite[Theorem 1.31]{Groth13}.

Now, we want to follow the intuition that a module over a polynomial ring~$R[T]$ is just an $R$-module together with a chosen $R$-linear endomorphism corresponding to the action of~$T$. Of course, once $T$ acts on an $R$-module then $T^n$ also acts for all~$n\in\bbN$. Consider, therefore, the \emph{loop} category
\begin{equation}
\label{eq:loop}%
\Loop=
\xymatrix{\bullet \ar@(ur,dr)[]^-{\bbN}}
\end{equation}
which has a single object with endomorphism monoid $\bbN=(\{0,1,2,3,\ldots\},+)$.

\begin{theorem}
\label{thm:main}%
Let $R$ be a ring and let $\bbD=\DER(R\MMod)$ be the derivator extending the derived category of~$R$ (Thm.\,\ref{thm:der}). Then there exists a canonical and natural isomorphism of derivators between $\DER(R[T]\MMod)$ and the derivator $\bbD^{\Loop}$ obtained by shifting~$\DER(R\MMod)$ by the loop category~$\Loop$, which is moreover a ``strong equivalence.'' Of course, there is also a similar canonical and natural isomorphism between $\DER(R[T_1,\ldots,T_n]\MMod)$ and $\bbD^{\Loop^n}$.
\end{theorem}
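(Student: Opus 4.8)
The plan is to reduce the whole statement to the elementary observation, already flagged before the theorem, that an $R[T]$-module is nothing but an $R$-module together with a chosen endomorphism, \ie a functor $\Loop\to R\MMod$. First I would make this precise as an \emph{isomorphism} (not merely an equivalence) of abelian categories
\[
R[T]\MMod \;\cong\; (R\MMod)^{\Loop},
\]
sending an $R[T]$-module $M$ to the diagram $\bullet\mapsto M$ whose generator $1\in\bbN$ acts by multiplication by $T$ (so that $n\in\bbN$ acts by $T^n$), and conversely sending a diagram $(M,f)$ to the $R$-module $M$ on which $T$ acts through $f$. Bijectivity on objects and morphisms and compatibility with the $R$-linear structure are immediate.

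Next I would propagate this isomorphism through the two standard, strictly natural identifications available for any abelian category $\cat A$ and any small category $J$: namely $\Ch(\cat A^{J})\cong\Ch(\cat A)^{J}$ (a chain complex of $J$-diagrams is the same as a $J$-diagram of chain complexes) and the exponential law $(\cat A^{J})^{K}\cong\cat A^{J\times K}$. Applying these with $\cat A=R\MMod$ gives, for every small category~$I$, a chain of isomorphisms of categories
\[
\Ch(R[T]\MMod)^{I} \;\cong\; \Ch\big((R\MMod)^{\Loop}\big)^{I} \;\cong\; \big(\Ch(R\MMod)^{\Loop}\big)^{I} \;\cong\; \Ch(R\MMod)^{\Loop\times I}.
\]
Since each identification is strictly natural in $I$ (compatible with precomposition $u^{*}$ along functors $u\colon I\to J$), the composite assembles into a strict isomorphism of the two $2$-functors $\Cat^{\op,\textrm{-}}\to\CAT$ given by $I\mapsto\Ch(R[T]\MMod)^{I}$ and $I\mapsto\Ch(R\MMod)^{\Loop\times I}$, \emph{before} passing to homotopy categories.

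The crucial check is then that this isomorphism matches the two notions of weak-equivalence used to form the homotopy categories in~\eqref{eq:Ho}. On the left these are the pointwise (over $I$) quasi-isomorphisms of $R[T]$-complexes; on the right they are the pointwise (over $\Loop\times I$) quasi-isomorphisms of $R$-complexes. Because $\Loop$ has a single object, a pointwise quasi-isomorphism over $\Loop\times I$ is exactly a pointwise quasi-isomorphism over $I$ of the underlying $R$-complexes; and a morphism of $R[T]$-complexes is a quasi-isomorphism precisely when its underlying complex of $R$-modules (indeed of abelian groups) is one. Hence the isomorphism of categories above carries weak-equivalences to weak-equivalences in both directions, so by the universal property of localization it descends to an isomorphism $\DER(R[T]\MMod)(I)\isotoo\bbD^{\Loop}(I)$, natural in $I$. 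As $\bbD^{\Loop}$ is a genuine derivator (the shift of a derivator is a derivator, \cite[Theorem~1.31]{Groth13}) and so is $\DER(R[T]\MMod)$ (Theorem~\ref{thm:der}), this natural isomorphism of prederivators is automatically an isomorphism of derivators; the fact that it is realized by an honest isomorphism of the chain-complex models carrying weak-equivalences to weak-equivalences is exactly the model-level realization that upgrades it to a ``strong equivalence.''

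I expect the only genuine subtlety to lie in this weak-equivalence bookkeeping --- keeping the objectwise-over-$\Loop\times I$ condition and the quasi-isomorphism-of-$R[T]$-complexes condition visibly aligned through all three identifications --- rather than in any deep homotopical input, consistent with the claim that the argument is essentially trivial. The several-variable case is then handled verbatim once one observes that $R[T_{1},\ldots,T_{n}]\MMod\cong(R\MMod)^{\Loop^{n}}$, reflecting that an $R[T_{1},\ldots,T_{n}]$-module is an $R$-module equipped with $n$ commuting endomorphisms and that $\Loop^{n}$ is the one-object category whose endomorphism monoid is $\bbN^{n}$.
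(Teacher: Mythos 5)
Your proposal is correct and follows essentially the same route as the paper: the isomorphism $(R\MMod)^{\Loop}\cong R[T]\MMod$, the identification $\Ch(\cat A^{L})\cong\Ch(\cat A)^{L}$ with matching weak-equivalences (the paper's Lemma~\ref{lem:Ch}), and the exponential law $(\cat M^{L})^{I}\cong\cat M^{L\times I}$ passed through localization (the paper's Proposition~\ref{prop:M^L}), with the strong-equivalence claim justified by the strictness of the model-level isomorphisms. The only cosmetic difference is that you treat the $n$-variable case directly via $\Loop^{n}$ where the paper argues by induction.
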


The word ``strong'' in the statement refers to the notion of ``strong equivalence'' of derivators due to Muro and Raptis~\cite[Def.\,3.3.2]{MuroRaptis14pp}. Evaluation at the terminal category~$I=e$, that is, on the base categories, explains the title:

\begin{corollary}
If $\bbD=\DER(R\MMod)$ is the derivator extending the derived category of~$R$, then $\bbD(\Loop^n)$ is the derived category of $R[T_1,\ldots,T_n]$ for any~$n\geq1$. \qed
\end{corollary}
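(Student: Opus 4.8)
The corollary is the evaluation at the terminal category $I=e$ of the isomorphism of derivators asserted in Theorem~\ref{thm:main}: since $\bbD^{\Loop^n}(e)=\bbD(\Loop^n\times e)=\bbD(\Loop^n)$ and, by Theorem~\ref{thm:der}, $\DER(R[T_1,\dots,T_n]\MMod)(e)=\Der(R[T_1,\dots,T_n]\MMod)$, any proof reduces to producing that isomorphism, so the plan is to establish Theorem~\ref{thm:main} and specialize. The whole argument descends from a single, purely algebraic, isomorphism of categories. The starting point is the classical identification of an $R[T]$-module with an $R$-module equipped with a chosen endomorphism, namely the action of $T$. In terms of the loop category $\Loop$ of~\eqref{eq:loop}, this is an isomorphism of categories $R[T]\MMod\cong(R\MMod)^{\Loop}$: a functor $F\colon\Loop\to R\MMod$ is determined by the module $F(\bullet)$ together with the single endomorphism $F(1)$, because $F(n)=F(1)^{n}$ and $F(0)=\mathrm{id}$. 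First I would promote this to complexes, checking that it restricts to an isomorphism of categories $\Ch(R[T]\MMod)\cong\Ch(R\MMod)^{\Loop}=\cat M^{\Loop}$, using that an $R[T]$-linear differential is exactly an $R$-linear differential commuting with the $T$-action.

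Next I would feed in an arbitrary small category $I$. Applying $(-)^{I}$ to the previous isomorphism and invoking the exponential (``currying'') law, I get
\[
\Ch(R[T]\MMod)^{I}\ \cong\ (\cat M^{\Loop})^{I}\ \cong\ \cat M^{\Loop\times I},
\]
an isomorphism of categories that is $2$-natural in $I$ and strictly compatible with the restriction functors $u^{*}$ attached to functors $u\colon I\to J$ (on the right these are precompositions with $\mathrm{id}_{\Loop}\times u$). This is exactly the bookkeeping needed to turn a family of comparisons into a single morphism of $2$-functors.

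The one point that genuinely requires care is that this isomorphism \emph{matches weak equivalences} on the two sides, so that it descends to homotopy categories. Here the relevant fact is that a morphism of complexes of $R[T]$-modules is a quasi-isomorphism if and only if the underlying morphism of complexes of $R$-modules is one, since homology does not see the $T$-action; and because $\Loop$ has a single object, objectwise weak equivalences over $\Loop\times I$ are precisely the objectwise quasi-isomorphisms over $I$ of the underlying $R$-complexes. With the weak equivalences identified, localizing the $2$-natural isomorphism above yields, for every $I$ and compatibly with all $u^{*}$,
\[
\DER(R[T]\MMod)(I)\ \cong\ \Ho(\cat M^{\Loop\times I})\ =\ \bbD(\Loop\times I)\ =\ \bbD^{\Loop}(I),
\]
which is the desired isomorphism of derivators; it is canonical and natural because every step is. Evaluating at $I=e$ gives the corollary for $n=1$.

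Finally, the $n$-variable case follows by iteration, using $R[T_1,\dots,T_n]=R[T_1]\cdots[T_n]$ together with the shift identity $(\bbD^{L})^{L'}=\bbD^{L\times L'}$, which is immediate from $(\bbD^{L})^{L'}(I)=\bbD^{L}(L'\times I)=\bbD(L\times L'\times I)$; specializing to $L=L'=\Loop$ reaches $\bbD^{\Loop^{n}}$ by induction. As for the ``strong equivalence'' clause, the point is that our comparison is not merely a pointwise equivalence but is induced by an honest isomorphism of the underlying categories with weak equivalences (indeed an isomorphism of the associated projective model structures, with identical weak equivalences and fibrations), so it is automatically a strong equivalence in the sense of Muro--Raptis. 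I expect no real obstacle beyond this coherence bookkeeping, which is why the result is ``essentially trivial'': everything is controlled by the one isomorphism $\Ch(R[T]\MMod)\cong\cat M^{\Loop}$.
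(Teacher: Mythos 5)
Your proposal is correct and follows essentially the same route as the paper: the corollary is just Theorem~\ref{thm:main} evaluated at $I=e$, and your proof of that theorem proceeds exactly as the paper's, via the isomorphism $(R\MMod)^{\Loop}\cong R[T]\MMod$, its lift to chain complexes with matching quasi-isomorphisms (the paper's Lemma~\ref{lem:Ch}), the exponential law $(\cat M^{L})^{I}\cong\cat M^{L\times I}$ (Proposition~\ref{prop:M^L}), and induction for $n$ variables. The only cosmetic difference is your optional appeal to projective model structures for the ``strong equivalence'' clause, where the paper simply observes that the comparison is a strict isomorphism of the underlying pairs $(\cat M,\cat W)$.
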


\smallbreak
\subsection*{The proof}\
\smallbreak

Let $R$ be a ring, $\Loop=\xymatrix{\bullet \ar@(ur,dr)[]^-{\bbN}}$ be the loop category~\eqref{eq:loop}. The starting point is the well-known canonical isomorphism of categories
\begin{equation}
\label{eq:polynomial}%
(R\MMod)^{\Loop}\cong R[T]\MMod.
\end{equation}
It maps a functor $A:\Loop\to R\MMod$ to the $R[T]$-module which is $A(\bullet)$ as $R$-module, on which $T$ acts as $A(1:\bullet\to \bullet)\,:\ A(\bullet)\to A(\bullet)$; and it maps a natural transformation $A\to A'$ to the associated morphism at the single object~$\bullet$. In the other direction, one associates to an $R[T]$-module~$M$ the functor which takes the value~$M$ (or rather its restriction as an $R$-module) at the unique object~$\bullet$ and maps any morphism $n:\bullet\to \bullet$ in~$\Loop$ to the morphism $(T^n\cdot-):M\to M$; the natural transformation associated to a morphism $M\to M'$ is the obvious one.

The plan is now to pass this isomorphism~\eqref{eq:polynomial} through the construction of the associated derivators, ``taking out'' the exponent~$\Loop$ as we go. From now on, the shifting category~$\Loop$ can be replaced by any small category~$L$.

\begin{lemma}
\label{lem:Ch}%
Let $L$ be a small category and $\cat A$ be an abelian category. There is a canonical isomorphism of categories
\[
(\Ch(\cat A))^L\simeq \Ch(\cat A^L)
\]
If we define weak-equivalences in~$(\Ch(\cat A))^L$ to be quasi-isomorphisms objectwise on~$L$, and weak-equivalences in~$\Ch(\cat A^L)$ are quasi-isomorphisms for the abelian category~$\cat A^L$, then this canonical isomorphism preserves weak-equivalences.
\end{lemma}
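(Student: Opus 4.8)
The plan is to exhibit the comparison functor by simply rearranging the underlying data in $\cat A$, checking it is bijective on objects and on morphisms, and then to reduce the matching of weak-equivalences to the standard fact that kernels, cokernels, and hence homology, are computed objectwise in the functor category $\cat A^L$.

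First I would unwind both sides. An object of $(\Ch(\cat A))^L$ is a functor $F\colon L\to\Ch(\cat A)$; it assigns to each $\ell\in L$ a complex $(F(\ell)_n,d_n^\ell)_{n\in\bbZ}$ and to each morphism $f\colon\ell\to\ell'$ a chain map $F(f)$, \ie a family $F(f)_n\colon F(\ell)_n\to F(\ell')_n$ commuting with the differentials. An object of $\Ch(\cat A^L)$ is a complex $(G_n,\delta_n)_{n\in\bbZ}$ in the abelian category $\cat A^L$, \ie a family of functors $G_n\colon L\to\cat A$ together with natural transformations $\delta_n\colon G_n\to G_{n-1}$ satisfying $\delta_{n-1}\delta_n=0$. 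The correspondence sends $F$ to the complex with $G_n=(\ell\mapsto F(\ell)_n)$ and $\delta_n=(d_n^\ell)_{\ell}$; the latter is a natural transformation precisely because each $F(f)$ is a chain map, and the relation $\delta_{n-1}\delta_n=0$ holds because it holds objectwise. This assignment is manifestly bijective, its inverse reconstructing $F$ from $(G_n,\delta_n)$.

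Next I would treat the morphisms. A morphism $F\to F'$ on the left is a natural transformation, \ie a family of chain maps $\eta_\ell\colon F(\ell)\to F'(\ell)$, natural in $\ell$; a morphism on the right is a chain map, \ie a family of natural transformations $\phi_n\colon G_n\to G'_n$ commuting with the differentials. In both cases the underlying datum is a single family of morphisms $F(\ell)_n\to F'(\ell)_n$ in $\cat A$ indexed by $(\ell,n)$, and the two imposed conditions — commutation with differentials and naturality in $\ell$ — are exactly the same pair of equations, merely read in a different order. Hence the object bijection upgrades to a bijection on morphisms, evidently compatible with composition and identities, so it defines an isomorphism of categories rather than a mere equivalence.

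For the statement about weak-equivalences, the key input is that $\cat A^L$ is abelian with limits and colimits, hence kernels, cokernels and homology, computed objectwise at each $\ell\in L$. Consequently, for the complex in $\cat A^L$ corresponding to $F$, the homology $H_n$ evaluated at $\ell$ equals $H_n(F(\ell))$ computed in $\cat A$. A morphism in $\Ch(\cat A^L)$ is a quasi-isomorphism iff it induces isomorphisms on all $H_n$ in $\cat A^L$; since isomorphisms in $\cat A^L$ are detected objectwise, this holds iff each $H_n$ at $\ell$ is an isomorphism in $\cat A$ for every $\ell$, which is exactly the condition that the corresponding morphism be an objectwise quasi-isomorphism in $(\Ch(\cat A))^L$. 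I do not expect a genuine obstacle anywhere: the two points requiring a little care are verifying that the chain-map and naturality conditions coincide under the reshuffling, and invoking the objectwise computation of homology in $\cat A^L$, the latter being standard once one recalls that (co)limits in a functor category are formed objectwise.
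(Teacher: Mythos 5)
Your proof is correct and follows essentially the same route as the paper: the paper phrases the object/morphism matching compactly by viewing both categories inside $(L\times\bbZ_{\scriptscriptstyle\leq})$-shaped diagrams in $\cat A$, which is exactly the reshuffling you carry out explicitly, and both arguments reduce the weak-equivalence claim to the objectwise computation of kernels and cokernels (hence homology) in $\cat A^L$.
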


\begin{proof}
Let $\bbZ_{\scriptscriptstyle\leq}$ be the category associated to the poset~$\bbZ$, that is, with one object for each integer and one morphism $n\to m$ whenever $n\leq m$. Both $(\Ch(\cat A))^L$ and $\Ch(\cat A^L)$ can be visualized as subcategories of $(L\times \bbZ_{\scriptscriptstyle\leq})$-shaped diagrams in~$\cat A$ where the composition of two consecutive morphisms in the $ \bbZ_{\scriptscriptstyle\leq}$-direction is 0. Since kernels and cokernels in~$\cat A^L$ are computed objectwise on~$L$, weak-equivalences on both sides are morphisms which, objectwise on~$L$, are quasi-isomorphisms in~$\cat A$.
\end{proof}

\begin{definition}
\label{def:M,W}%
Let $\cat M$ be a category and $\cat W$ be a class of morphisms in~$\cat M$ that we call ``weak-equivalences.'' For example, $\cat M$ might be a model category or Waldhausen category and $\cat W$ could be the weak equivalences determined by the model category or Waldhausen structure. Given a small category~$I$, we equip $\cat M^I$ with the objectwise weak-equivalences~$\cat W^I$, as above. We say that the pair $(\cat M,\cat W)$ \emph{induces a derivator} if, for every small category~$I$, the localization $\cat M^I[(\cat W^I)\inv]$ of~$\cat M^I$ with respect to~$\cat W^I$ is a category and more importantly if the induced prederivator $I\mapsto \cat M^I[(\cat W^I)\inv]$ is a derivator. In that case we denote the derivator induced by $(\cat M,\cat W)$ as
\[
\HO(\cat M,\cat W)\,:\Cat^{\op,\textrm{-}}\too \CAT\,.
\]
\end{definition}

\begin{remark}
\label{rem:der}%
In this language, Theorem~\ref{thm:der} says that $(\cat M=\Ch(R\MMod), \cat W=\{\textrm{q-isos}\})$ induces a derivator. One way to demonstrate this is to put a Quillen model structure on $\Ch(R\MMod)$ and apply Cisinski's result~\cite[Theorem~1]{Cisinski03}, which says that every model category induces a derivator via formula~\eqref{eq:Ho}. In fact, as long as the weak-equivalences are fixed, one can play with different model structures on~$\cat M^I$ to ensure in turn that for any functor $u:I\rightarrow J$, $u^*:\cat M^J\to \cat M^I$ is a left or a right Quillen functor, thus obtaining the right and left adjoints to $u^*$ (required as part of the definition of derivator) as derived functors themselves. Alternatively, every Grothendieck abelian category $\cat A$ also induces a derivator via formula~\eqref{eq:Ho}, using the pair $(\cat M=\Ch(\cat A),\cat W=\{\textrm{q-isos}\})$; see~\cite[Chap.III, p.10]{Grothendieck91} where the case $\cat A=R\MMod$ is explicitly given.
\end{remark}

\begin{proposition}
\label{prop:M^L}%
Let $\cat M$ be a category with a class~$\cat W$ of weak-equivalences and let $L$ be a small category. Suppose that $(\cat M,\cat W)$ induces a derivator in the sense of Definition~\ref{def:M,W} and let $\bbD=\HO(\cat M,\cat W)$. Then $(\cat M^L,\cat W^L)$ induces a derivator, and we have a natural isomorphism of derivators $\HO(\cat M^L,\cat W^L)\cong \bbD^L$, which is moreover a strong equivalence.
\end{proposition}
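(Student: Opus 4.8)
The plan is to produce the isomorphism first at the level of the ``models'' and only afterwards pass to the localizations. First I would invoke the exponential law for functor categories: for any small category~$I$, currying provides a canonical isomorphism
\[
(\cat M^L)^I \cong \cat M^{L\times I}
\]
sending an $I$-shaped diagram of $L$-shaped diagrams to the corresponding $(L\times I)$-shaped diagram. I would note that this isomorphism is strictly $2$-natural in~$I$, hence compatible both with the restriction functors~$u^*$ attached to a functor $u:I\to J$ (on the right-hand side, $u^*$ being induced by $\mathrm{id}_L\times u$) and with the induced $2$-cells~$\alpha^*$.

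Next I would carry out the only genuine bookkeeping, namely matching the two classes of weak-equivalences. A morphism in~$(\cat M^L)^I$ lies in~$(\cat W^L)^I$ exactly when it is objectwise on~$I$ a morphism of~$\cat W^L$, that is, exactly when it is objectwise on each object of~$L\times I$ a morphism of~$\cat W$; but this is precisely the defining condition for~$\cat W^{L\times I}$. So the currying isomorphism identifies $(\cat W^L)^I$ with $\cat W^{L\times I}$. Applying the (functorial) localization then produces a canonical isomorphism of categories
\[
\HO(\cat M^L,\cat W^L)(I)=(\cat M^L)^I\big[\big((\cat W^L)^I\big)\inv\big]\cong \cat M^{L\times I}\big[(\cat W^{L\times I})\inv\big]=\bbD(L\times I)=\bbD^L(I),
\]
natural in~$I$ by the previous paragraph.

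From here the derivator statement is formal. Each localization above is a genuine category, being isomorphic to~$\bbD(L\times I)$, so the pair $(\cat M^L,\cat W^L)$ satisfies the preliminary hypothesis of Definition~\ref{def:M,W}, and the above assignment is a prederivator isomorphic to~$\bbD^L$. Since $\bbD^L$ is already a derivator (Definition~\ref{def:shift}) and any prederivator isomorphic to a derivator is again a derivator, I would conclude immediately that $(\cat M^L,\cat W^L)$ induces a derivator and that $\HO(\cat M^L,\cat W^L)\cong\bbD^L$.

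The main obstacle, I expect, is the final clause: that this isomorphism is a \emph{strong} equivalence in the sense of Muro--Raptis, since that requires engaging with their specific definition rather than with purely formal considerations. The decisive point should be that our isomorphism is inherited from an honest isomorphism of categories-with-weak-equivalences \emph{before} any localization is performed; a model-level identification of this kind ought to meet the compatibilities demanded of a strong equivalence essentially on the nose. I would finish by unwinding the Muro--Raptis conditions against the currying isomorphism and verifying each of them directly.
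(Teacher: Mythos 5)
Your proposal is correct and follows essentially the same route as the paper: the exponential law $(\cat M^L)^I\cong\cat M^{L\times I}$, the matching of $(\cat W^L)^I$ with $\cat W^{L\times I}$, passage to localizations, and the observation that the resulting strict isomorphism of prederivators yields both the derivator property and the strong equivalence. The paper is equally brief on the Muro--Raptis verification, leaving it to ``the specialized reader.''
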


\begin{proof}
We have a natural isomorphism $(\cat M^L)^I\cong\cat M^{L\times I}$ by the categorical exponential law. Moreover the respective weak-equivalences $(\cat W^L)^I$ and $\cat W^{L\times I}$ in the two categories are preserved by this isomorphism. Hence the condition for $(\cat M^L,\cat W^L)$ to induce a derivator follows from the corresponding hypothesis on~$(\cat M,\cat W)$.

For a given $I$, the values of the derivators $\HO(\cat M^L, \cat W^L)$ and $\bbD^L$ are
\[
\HO(\cat M^L,\cat W^L)(I)=(\cat M^L)^I[((\cat W^L)^I)^{-1}]
\quad\textrm{and}\quad
\bbD^L(I)=\cat M^{L\times I}[(\cat W^{L\times I})^{-1}]
\]
respectively. These categories are isomorphic by the above discussion, hence the derivators are isomorphic. Moreover, these isomorphisms assemble to strict morphisms of derivators and therefore define strongly equivalent derivators in the sense of ~\cite{MuroRaptis14pp}, as the specialized reader will directly verify.
\end{proof}

\begin{proof}[Proof of Theorem~\ref{thm:main}]
We have $(R\MMod)^{\Loop}\cong R[T]\MMod$ by~\eqref{eq:polynomial}, from which we get the isomorphism of derivator-defining (Def.\,\ref{def:M,W}) pairs $\big(\Ch(R[T]\MMod)\,,\,\{\textrm{q.isos}\}\big)\cong \big(\Ch(R\MMod)\,,\,\{\textrm{q.isos}\}\big)^{\Loop}$ by Lemma~\ref{lem:Ch}. Then Proposition~\ref{prop:M^L} gives us the statement about the associated derivators. The case of~$R[T_1,\ldots,T_n]$ follows by induction.
\end{proof}

\begin{remark}
We do not use that $R$ is commutative and the result probably holds in broader generality. We leave such variations to the interested reader and only quickly outline the scheme case, at the referee's suggestion.
\end{remark}

\smallbreak
\subsection*{The scheme case}\
\smallbreak

Let $X$ be a quasi-compact and \emph{separated} scheme. We can associate a derivator to $X$ by $\bbD_{X}(I)=D(\Qcoh(X)^I)$, where $\Qcoh(X)$ denotes the category of quasi-coherent $\calO_X$-modules. As $\Qcoh(X)$ is a Grothendieck abelian category, this is a stable derivator. If $X$ was not separated but only quasi-separated, the appropriate ``derived category'' of $X$ would be the derived category of $\calO_X$-modules with quasi-coherent homology. Under our assumption that $X$ is separated, this coincides with the above. This mild restriction is irrelevant for the point we want to make.

\begin{proposition}
We have an isomorphism of categories $\Qcoh(\bbA^1_{X})\cong \Qcoh(X)^{\Loop}$. 
\end{proposition}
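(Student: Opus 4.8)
The plan is to reduce to the relative affine situation, exactly parallel to the isomorphism~\eqref{eq:polynomial} of the affine case. The projection $\pi\colon\bbA^1_X\to X$ is an \emph{affine} morphism, and $\bbA^1_X=\underline{\Spec}_X(\mathcal{A})$ is the relative spectrum of the quasi-coherent sheaf of $\calO_X$-algebras $\mathcal{A}:=\pi_*\calO_{\bbA^1_X}=\bigoplus_{n\geq0}\calO_X\cdot T^n$, namely the polynomial algebra $\calO_X[T]$. For any affine morphism, pushforward along $\pi$ induces an equivalence between $\Qcoh(\bbA^1_X)$ and the category $\mathcal{A}\MMod$ of sheaves of $\mathcal{A}$-modules that are quasi-coherent as $\calO_X$-modules; this is the standard relative-$\Spec$ equivalence (see EGA~II, \S1, or Hartshorne's exercise on affine morphisms). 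So the first step is to invoke this equivalence to replace $\Qcoh(\bbA^1_X)$ by $\mathcal{A}\MMod$.

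The second step is to unwind what a quasi-coherent $\calO_X[T]$-module is. Since $\calO_X[T]$ is generated over $\calO_X$ by the single element $T$, endowing a quasi-coherent sheaf with an $\calO_X[T]$-module structure amounts to giving a quasi-coherent $\calO_X$-module $\mathcal{F}$ together with one $\calO_X$-linear endomorphism $\varphi\colon\mathcal{F}\to\mathcal{F}$, namely the action of $T$; and morphisms of $\calO_X[T]$-modules are precisely the $\calO_X$-linear maps commuting with $\varphi$. This is exactly the pointwise recipe behind~\eqref{eq:polynomial}, now performed internally in the abelian category $\Qcoh(X)$ in place of $R\MMod$.

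Finally, I would identify the category of such pairs $(\mathcal{F},\varphi)$ with $\Qcoh(X)^{\Loop}$: a functor $\Loop\to\Qcoh(X)$ is the same as a choice of object $\mathcal{F}=\mathcal{F}(\bullet)$ together with the endomorphism $\varphi=\mathcal{F}(1)$ attached to the generator $1$ of the monoid $\bbN$ (the value at $n$ being forced to be $\varphi^n$), while natural transformations correspond to $\varphi$-commuting morphisms. Composing the three identifications yields the asserted isomorphism $\Qcoh(\bbA^1_X)\cong\Qcoh(X)^{\Loop}$.

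The main obstacle is the first step: the affine-morphism equivalence $\Qcoh(\bbA^1_X)\cong\mathcal{A}\MMod$ is where a genuine verification is needed---that $\pi_*$ lands in quasi-coherent $\mathcal{A}$-modules and is essentially surjective, with relative $\Spec$ as quasi-inverse. Because $\pi$ is affine this already works over any quasi-compact base by gluing over an affine open cover, where it reduces cover-wise to~\eqref{eq:polynomial} over each $\Spec R$; separatedness of $X$ guarantees that intersections of affine opens are again affine, so the local identifications glue compatibly, but beyond that it plays no essential role in this purely categorical statement about $\Qcoh$.
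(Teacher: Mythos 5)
Your proof is correct, but it is organized differently from the paper's. The paper proves the statement by hand-rolled Zariski descent: it takes an affine open cover $\{U_i=\Spec A_i\}$ of $X$, observes that the obvious functor $\Qcoh(\bbA^1_X)\to\Qcoh(X)^{\Loop}$ restricts to the affine isomorphism~\eqref{eq:polynomial} on each $U_i$, on double intersections (gluing data) and on triple intersections (cocycle condition), and concludes by inspection. You instead factor the argument through the category of quasi-coherent $\calO_X[T]$-modules: first the relative-$\Spec$ equivalence $\Qcoh(\bbA^1_X)\simeq\calO_X[T]\MMod$ for the affine morphism $\pi\colon\bbA^1_X\to X$, then the purely formal identifications $\calO_X[T]\MMod\cong\{(\mathcal F,\varphi)\}\cong\Qcoh(X)^{\Loop}$. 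This buys a cleaner separation of concerns --- all the geometric content is isolated in one standard, citable equivalence (whose proof is, of course, essentially the same descent the paper performs explicitly), and the rest is monoid-algebra bookkeeping identical in spirit to~\eqref{eq:polynomial}; it also makes transparent that separatedness plays no role in this particular proposition, as you note. The paper's route is more self-contained and avoids invoking EGA. One small caveat applies to both arguments: the relative-$\Spec$ comparison is naturally an \emph{equivalence} rather than an isomorphism of categories on the nose, so to get the literal ``isomorphism'' asserted in the statement you should either fix $\bbA^1_X:=\underline{\Spec}_X(\calO_X[T])$ so that the identifications become strict, or be content with a canonical equivalence --- the same looseness is present in the paper's ``obvious functor'' and is harmless for the derivator-level conclusions that follow.
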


\begin{proof}
This is a global analogue of the affine case~\eqref{eq:polynomial}, from which it follows by Zariski descent. Indeed, if $\{U_i=\Spec A_i\}$ is an open cover of~$X$, then $\{\bbA^1_{U_i}=\Spec A_i[t]\}$ form an open cover of $\bbA^1_{X}$. The obvious functor $\Qcoh(\bbA^1_X)\to \Qcoh(X)^{\Loop}$ restricts to the canonical isomorphism~\eqref{eq:polynomial} on each open~$U_i$, on each double intersection (for gluing data), and on each triple intersection (for cocycle condition). The result follows by direct inspection.
\end{proof}

The category of quasi-coherent sheaves on $X$ is Grothendieck abelian, so we may apply Lemma~\ref{lem:Ch} and Proposition~\ref{prop:M^L} as in the proof of Theorem~\ref{thm:main} to obtain the isomorphism of derivators~$(\bbD_{X})^{\Loop}\cong\bbD_{\bbA^1_{X}}$. On base categories, this isomorphism describes the derived category of~$\bbA^1_X$ as $\bbD_{X}(\Loop)$.




\end{document}